\documentclass[12pt, reqno]{amsart}
\usepackage{amsmath, amsthm, amscd, amsfonts, latexsym, amssymb, graphicx, color}
\usepackage[bookmarksnumbered, colorlinks, plainpages]{hyperref}

\usepackage{amssymb}
\usepackage [all]{xy}
\usepackage{mathtools}
\usepackage{faktor}
\usepackage[percent]{overpic}
\usepackage{mathrsfs}


\DeclareMathOperator{\ima}{im}

\newcommand{\Free}{\mathscr{F}}
\newcommand{\Peano}{\mathscr{P}}

\newcommand{\I}{\mathcal{I}}
\newcommand{\Nat}{\mathbb{N}}

\newcommand{\length}[1]{\ensuremath{\lvert {#1} \rvert}}

\makeatletter \oddsidemargin.9375in \evensidemargin \oddsidemargin
\marginparwidth1.9375in \makeatother

\newtheorem{thm}{Theorem}[section]
\newtheorem{lem}[thm]{Lemma}
\newtheorem{prop}[thm]{Proposition}

\theoremstyle{definition}
\newtheorem{defn}[thm]{Definition}
\newtheorem{exa}[thm]{Example}

\theoremstyle{remark}

\numberwithin{equation}{section}

\begin{document}

\setcounter{page}{1}

\title[On semi-Peano algebras]{On semi-Peano algebras}
\author[Cardó]{C. Cardó}
\thanks{{\scriptsize
\hskip -0.4 true cm MSC(2010): Primary: 08A60, 08B20; Secondary: 08B25.
\newline Keywords: Peano algebra, generalized Peano algebra, semi-Peano algebra, unary algebra.\\
$*$Corresponding author }}

\begin{abstract}
A semi-Peano algebra is an algebra for which each operation is injective, and the images of the operations are pairwise disjoint. The most straightforward non-trivial kind of finitely presented semi-Peano algebra are algebras with a single unary operation. There are two possible directions of generalization: algebras with a single operation of any arity, and algebras with several unary operations. The first can be solved easily by adapting results on equidecomposable groupoids from \cite{cardo2020equidecomposable}. However, the second way is somewhat different. We will show that a finitely presented multi-unary semi-Peano algebra is the free product of cyclic semi-Peano algebras and that a unique relation defines such cyclic algebras. In addition, we will characterize each cyclic algebra up to isomorphism.
\end{abstract}

\maketitle

\section {Introduction}\label{Notation}
Peano axioms of arithmetic of natural numbers can be understood as a cyclic algebra $\mathcal{N}=(N; s)$, such that the successor operation $s:N\longrightarrow N$ is injective, and such that the generator, that is, the zero, cannot be decomposed.
\emph{Generalized Peano algebras}, or simply here \emph{Peano algebras}, are a generalization of $\mathcal{N}$ for several operations and several generators:

\begin{defn} \label{Peano} An element of an algebra is said to be \emph{decomposable} if it belongs to the image of some operation.  A \emph{Peano algebra} is an algebra $\mathcal{A}=(A;F)$ satisfying the two following conditions:
\begin{enumerate}
\item[{\rm (i)}] for any $f,g \in F$, if $f(x_1, \ldots, x_n)=g(y_1,\ldots, y_m)$, then $f=g$, and $x_i=y_i$ for all $i=1, \ldots, n=m$.
\item[{\rm (ii)}] $\mathcal{A}$ is generated by its indecomposable elements. 
\end{enumerate}
When only {\rm (i)} is required, the algebra is said to be a \emph{semi-Peano algebra}. The class of those algebras is denoted by $\mathbf{SP}$. 
\end{defn}
The class of Peano algebras corresponds precisely to the class of absolutely free algebras; see \cite{diener1966order}, Proposition~3, for the statement, and \cite{Tandareanu2} for a proof. 
Similarly to natural numbers, Peano algebras provide an \emph{algebraic induction}. If a property $P$ is satisfied for every generator, and for each operation $f$ we have that:
\begin{align*}
x_1, \ldots, x_n \mbox{ satisfy } P \implies f(x_1, \ldots, x_n) \mbox{ satifies } P,
\end{align*}
then each element of the algebra satisfies the property $P$.
Peano algebras were introduced early to construct formal languages for algebraic logics with the possibility of infinitary connectives, see Diener \cite{diener1966order, diener1967note, diener1993predecessor, Dinier1995}. More recently, the format of free algebras as Peano algebras is used in computer science, in the area of knowledge representation through the concept of a labeled stratified graph, \cite{Tandareanu2000, Tandareanu2004, Tandareanu2}.

Let us fix some notation. $\mathcal{A}=(A; \{f_i\}_{i\in I})$ denotes an algebra with operations indexed by $I$.
When the index is not necessary, we will write $\mathcal{A}=(A;F)$, where $F=\{f_i\}_{i\in I}$.
From now on, a \emph{unary} algebra will mean a multi-unary algebra. A \emph{mono-unary algebra} is an algebra with only one operation. 
Absolutely free algebras over the generating set $X$ are denoted by $\Free(X)$. For cyclic free algebras with generator $x$ we write $\Free(x)$ or simply $\Free$.
$X \sqcup Y$ means $X\cup Y$, but with the condition that the sets are disjoint.
For other common concepts and notations on universal algebra, we follow \cite{sankappanavar1981course}.

Our primary interest is finitely presented semi-Peano algebras. Before starting, let us comment on some elementary properties and examples of such algebras. 
It is routine to prove that the class $\mathbf{SP}$ is closed under subalgebras and arbitrary direct products but not under quotients.
Finite non-nullary semi-Peano algebras have at most one operation. Consider two operations $f: A^n \longrightarrow A$ and $g: A^m \longrightarrow A$, such that $n\geq 1$ or $m\geq 1$. If $n\geq 1$, $f$ is injective. Since $A$ is finite, $f$ is bijective, and then $\ima f\cap \ima g \not= \emptyset$, which is absurd. If $m\geq 1$, the argument is similar. In particular, only the class of algebras with a single operation possesses trivial algebras. If an operation $f:A^n \longrightarrow A$ is injective and $A$ is finite, then $\length{A}^n\leq \length{A}$, which only is possible if $\length{A}=1$ or $0\leq n \leq1$.
Consequently, a non-trivial semi-Peano algebra is finite if and only if it is nullary or mono-unary.

Semi-Peano algebras are related to Jónsson-Tarski algebras, as shown in the following examples. 

\begin{exa}
A \emph{Jónsson-Tarski algebra} \cite{freese2020free, Jonsson1961Properties} is an algebra $\mathcal{J}=(J;+,\alpha,\beta)$, where $+$ is a binary operation and $\alpha, \beta$ are unary, satisfying the Cantor identities:
\begin{align*}
\alpha(x)+\beta(x)=x, \quad \alpha(x+y)=x, \quad \beta(x+y)=y.
\end{align*}
Since the operation $+:J^2 \longrightarrow J$ is necessarily bijective, reducts of Jónsson-Tarski algebras $(J;+)$ are semi-Peano groupoids. This inclusion of classes is proper. 
The free product of the trivial groupoid $\I$ by the cyclic free groupoid $\Free$ yields a semi-Peano groupoid which is not a free groupoid nor a reduct of a Jónsson-Tarski algebra: $\I * \Free \cong \langle a,b \mid a \approx a + a \rangle$.
\end{exa}

\begin{exa} \label{Unarisation} 
Jónsson-Tarski algebras can be generalized in the following obvious way, cf. \cite{dudek2012cantor}. Let $f$ be an $n$-ary operation and let  $p_1$, \ldots, $p_n$ be unary operations, satisfying the generalized Cantor identities:
$$p_i(f(x_1, \ldots, x_n))=x_i, \forall i=1, \ldots, n, \quad f(p_1(x), \ldots, p_n(x))=x.$$
Given an algebra with a single operation of arity $n>1$,  $\mathcal{A}=(A; f)$, by the \emph{unarisation of $\mathcal{A}$ on the $k$-th component}, denoted by $\mathfrak{U}_k(\mathcal{A})$, we mean the algebra: 
\begin{align*} \mathfrak{U}_k(\mathcal{A})=(A; \{f_{\vec{a}_k}\}_{ \vec{a}_k\in A^{n-1}}) \end{align*}
where $\vec{a}_k=(a_1, \ldots,a_{k-1}, a_{k+1},\ldots,  a_n)$ and 
\begin{align*}
f_{\vec{a}_k}(x)=f(a_1, \ldots, a_{k-1},x,a_{k+1}, \ldots, a_n).
\end{align*}
Then, the unarisation of the reduct on $f$ of a generalized Jónsson-Tarski algebra is a unary semi-Peano algebra, with possibly infinite operations. 
\end{exa}
The most straightforward non-trivial kind of finitely presented semi-Peano algebra are algebras with a single unary operation. The graph of those algebras consists of a finite set of directed cycles and ray graphs. An example is the algebra of natural numbers $\mathcal{N}$ whose graph is a ray. There are two direct generalizations of those algebras: (I) algebras with a single operation of any arity; and (II) algebras with several unary operations.

Regarding the way (I), the case of arity two was solved in \cite{cardo2020equidecomposable}. An \emph{equidecomposable groupoid} $\mathcal{M}=(M; +)$ is groupoid such that $x+y=x'+y'$ implies that $x=x', y=y'$, or in other terms, the operation $+$ is injective. Therefore, equidecomposable groupoids and semi-Peano groupoids are the same things. Finitely presented semi-Peano groupoids can be expressed in a normal form that characterizes them isomorphically. More specifically, if $X$ is a minimal generating set of a semi-Peano groupoid $\mathcal{M}$, there is a unique injective mapping $\Psi: X \longrightarrow \Free(X)$, such that for every $x\in X$, $x$ appears at least once in the term $\Psi(x)$, and such that $\mathcal{M}$ is the quotient of $\Free(X)$ by the congruence generated by the relations $(x,\Psi(x))$. In short, for each semi-Peano groupoid, there is a unique presentation without redundant relations; see \cite{cardo2020equidecomposable} for the details.
A generalization of that result for an operation of arity greater than two does not present a significant difficulty. It is only necessary to rewrite theorems from \cite{cardo2020equidecomposable} but with a more complicated notation. Thus, the way (I) does not offer much more mathematical interest.

Here we are concerned with the second case (II). We will show that any finitely presented unary semi-Peano algebra is the free product of cyclic unary semi-Peano algebras and that the congruences generated by a single relation define those cyclic algebras. This relation is given by the action of a word of the free monoid on the set of operations. Proving the decomposition is almost an elementary fact. So the contribution of this paper consists of the study of cyclic unary semi-Peano algebras.

\section {Some preliminary facts on semi-Peano algebras and unary algebras}\label{PreliminaryFacts}

We will need a characterization of semi-Peano algebras in terms of congruences. 

\begin{defn} A congruence $\theta$ of an algebra $\mathcal{U}=(U;F)$ is said to be:
\begin{enumerate}
\item[{\rm (i)}] \emph{closed} if for each $f \in F$:
\begin{align*} 
(f(x_1, \ldots, x_n), f(y_1, \ldots, y_n)) \in \theta \implies (x_1, y_1), \ldots, (x_n,y_n) \in \theta.
\end{align*}
\item[{\rm (ii)}]  \emph{balanced} if for each $f,g \in F$:
\begin{align*}
(f(x_1, \ldots, x_n), g(y_1, \ldots, y_m)) \in \theta \implies f=g.
\end{align*}
\end{enumerate}
\end{defn}

\begin{lem} \label{LemaEquiClosed} Given an homomorphism of algebras $\Phi$, $\ima \Phi$ is a semi-Peano algebra if and only if $\ker \Phi$ is a closed and balanced congruence.
\end{lem}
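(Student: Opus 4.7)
The plan is to reduce everything to the first isomorphism theorem and then unfold Definition~\ref{Peano}(i) on the quotient $\mathcal{A}/\ker\Phi$, where $\mathcal{A}$ denotes the domain of $\Phi$. Since $\ima\Phi \cong \mathcal{A}/\ker\Phi$ as algebras, $\ima\Phi$ is a semi-Peano algebra if and only if the quotient is; so it suffices to rephrase the semi-Peano condition on $\mathcal{A}/\ker\Phi$ purely in terms of $\ker\Phi$.

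The key translation step is the following routine observation. By the definition of operations on a quotient, an equality $f([x_1],\ldots,[x_n]) = g([y_1],\ldots,[y_m])$ in $\mathcal{A}/\ker\Phi$ holds if and only if $(f(x_1,\ldots,x_n),\, g(y_1,\ldots,y_m)) \in \ker\Phi$; and $[x_i]=[y_i]$ if and only if $(x_i,y_i)\in\ker\Phi$. Substituting these translations into clause (i) of Definition~\ref{Peano}, the semi-Peanoness of $\mathcal{A}/\ker\Phi$ becomes exactly the assertion: whenever $(f(x_1,\ldots,x_n),\, g(y_1,\ldots,y_m)) \in \ker\Phi$, one must have $f=g$ and, moreover, $(x_i,y_i)\in\ker\Phi$ for each $i$. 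The first conjunct is verbatim the balanced condition on $\ker\Phi$; once $f=g$ has been extracted, the second is verbatim the closed condition. The two directions of the equivalence claimed in the lemma now fall out symmetrically from this rewriting.

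No serious obstacle is expected, as the argument is essentially a matter of unfolding the definitions through the isomorphism theorem. The only point to keep clean is the logical order: balancedness is applied first to collapse the situation to the case $f=g$, and only then can closedness be invoked to obtain the componentwise conclusion $(x_i,y_i)\in\ker\Phi$.
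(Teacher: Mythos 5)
Your proposal is correct and takes essentially the same route as the paper's proof: the paper likewise just unfolds Definition~\ref{Peano}(i) into a condition on $\ker \Phi$ via a chain of equivalences, working directly with elements $\Phi(x)$ of $\ima \Phi$ rather than passing through the quotient $\mathcal{A}/\ker\Phi$ and the isomorphism theorem, but the content is identical. Your remark about applying balancedness first (to reduce to $f=g$) before invoking closedness is the right point to keep clean, and is handled implicitly in the paper's combined condition.
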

\begin{proof} Let $\Phi: \mathcal{U} \longrightarrow \mathcal{V} $ be a homomorphism of algebras and $\mathcal{U}=(U;F)$. We have the following equivalences, where each statement must be read with the prefix ``for all $x_1, \ldots, x_n, y_1, \ldots, y_m \in U$'', and $n$ and $m$ are the corresponding arities: \begin{align*}
& \ima \Phi \mbox{ is semi-Peano}\\
\iff & \mbox{ if } f(\Phi(x_1), \ldots, \Phi(x_n))=g((\Phi(y_1), \ldots, \Phi(y_m)) \mbox{ with } f,g \in F, \\
&\mbox{ then } f=g, n=m \mbox{ and } \Phi(x_1)=\Phi(y_1), \ldots, \Phi(x_n)=\Phi(y_n)\\
\iff & \mbox{ if } \Phi( f(x_1, \ldots, x_n))=\Phi(g(y_1, \ldots, y_m)) \mbox{ with } f,g \in F, \\
&\mbox{ then } f=g, n=m \mbox{ and } \Phi(x_1)=\Phi(y_1), \ldots, \Phi(x_n)=\Phi(y_n)\\
\iff & \mbox{ if } (f(x_1, \ldots, x_n), g(y_1, \ldots, y_m)) \in \ker \Phi \mbox{ with } f,g \in F, \\
&\mbox{ then } f=g, n=m \mbox{ and } (x_1, y_1), \ldots, (x_n, y_n) \in \ker \Phi\\
\iff & \ker \Phi \mbox{ is closed and balanced}. 
\end{align*}
\end{proof}

$\mathbf{SP}$ possesses $\mathbf{SP}$-free algebras. Every absolutely free algebra is a Peano algebra, which, in its turn, is a semi-Peano algebra. Since absolutely free algebras satisfy the universal mapping property, the class of $\mathbf{SP}$-free algebras is the class of absolutely free algebras.
Thus, each semi-Peano algebra $\mathcal{U}$ is of the form $\Free(X)/\theta$, where $\theta$ is a closed and balanced congruence, and $X$ is a generator set of $\mathcal{U}$. The arbitrary intersection of closed and balanced congruences is closed and balanced. Therefore, the following definition makes sense.

\begin{defn} Given an absolutely free algebra $\Free(X)$ and a subset $R\subseteq \Free(X)^2$, the least closed and balanced congruence on $\Free(X)$ containing $R$ is denoted by $\boxminus(R)$. Congruences generated by only one relation $(a,b)$ are abbreviated $\boxminus(a,b)=\boxminus(\{(a,b)\})$. A semi-Peano algebra $\mathcal{U}$ is \emph{finitely presented} if there are finite sets $X$ and $R$ such that $\mathcal{U}\cong \Free(X)/\boxminus(R)$.
\end{defn}

In addition, we  will need some elementary facts on unary algebras. Given a unary algebra $\mathcal{U}=(U; F)$, a term function is a mapping of the form $\alpha_1\circ \cdots \circ \alpha_n: U \longrightarrow U$, for some operations  $\alpha_1, \ldots, \alpha_n \in F$. Consider the free monoid $F^*$ of words over the alphabet $F$. There is a natural action of the monoid $F^*$ over the set $U$ defined by $
\alpha_1 \cdots \alpha_n(x)=(\alpha_1 \circ \cdots \circ \alpha_n) (x)$, and if $1$ denotes the empty word, then $1(x)=x$.  
The length of a word $\varphi \in F^*$ is denoted by $\length{ \varphi }$. 
Given two words $\varphi$ and $\psi$ such that $\varphi$ is a prefix of $\psi$,  $\varphi {\setminus} \psi$ denotes the unique word such that $\varphi (\varphi {\setminus} \psi)=\psi$. When $\psi$ is a suffix of $\varphi$,  $\varphi/ \psi$ denotes the unique word such that $(\varphi / \psi)\psi=\varphi$.
Two words $\omega$ and $\omega'$ are \emph{conjugated} if there is a word $\varphi$ such that $\omega \varphi= \varphi\omega'$.
This is equivalent to saying that there are words $\alpha, \beta$ such that $\omega=\alpha\beta$, $\omega'=\beta\alpha$.

The \emph{free product} of two unary algebras $\mathcal{U}=(U; \{f_i\}_{i \in I})$ and $\mathcal{V}=(V; \{g_i\}_{i \in I})$ is  defined by $\mathcal{U}\amalg \mathcal{V}=(U\sqcup V; \{f_i \sqcup g_i\}_{i \in I})$ where:
\begin{align*}
f_i \sqcup g_i (x)= \begin{cases} f_i(x) & \mbox{if } x \in U, \\ g_i(x) & \mbox{if } x \in V. \end{cases}
\end{align*}
The product can be extended for an arbitrary family of algebras $\coprod_{j\in J} \mathcal{U}_j$. An algebra $\mathcal{W}$ is said to be $\amalg $-irreducible when $\mathcal{W}=\mathcal{U}\amalg \mathcal{V}$ implies that $\mathcal{U}=\emptyset$ or $\mathcal{V}=\emptyset$.
Consider the following relation in a unary algebra: $x \sim y$ if and only if there is a word $\varphi$ such that $\varphi(x)=y$ or $\varphi(y)=x$. $\sim$ is an equivalence relation and defines a partitition of the algebra into $\amalg$-irreducible subalgebras. Cyclic unary algebras are always $\amalg$-irreducible. The reciprocal is not in general true, except for semi-Peano algebras, as it is shown in the Proposition~\ref{Proposition1}.

\section{Classification of unary semi-Peano algebras}

\begin{lem} \label{LemaPrevi} If $\varphi(x)=\psi(y)$ in a unary semi-Peano algebra and $\length{\varphi}\leq \length{\psi}$, then $\varphi$ is a prefix of $\psi$ and $x=\varphi {\setminus} \psi (y)$.
\end{lem}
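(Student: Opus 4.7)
The plan is a straightforward induction on $\length{\varphi}$, using the defining property of semi-Peano algebras (injectivity of each operation plus pairwise disjoint images) applied at the outermost symbol at each step.

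First I would dispose of the base case $\length{\varphi}=0$: then $\varphi$ is the empty word $1$, which is trivially a prefix of $\psi$, and the hypothesis $\varphi(x)=\psi(y)$ becomes $x=\psi(y)$; since in this case $\varphi{\setminus}\psi=\psi$, the desired identity $x=\varphi{\setminus}\psi(y)$ holds.

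For the inductive step, write $\varphi=\alpha\varphi'$ and, using $\length{\psi}\geq \length{\varphi}\geq 1$, write $\psi=\beta\psi'$, where $\alpha,\beta\in F$. The equality $\varphi(x)=\psi(y)$ then reads $\alpha(\varphi'(x))=\beta(\psi'(y))$. Applying Definition~\ref{Peano}(i) (which holds in any semi-Peano algebra and, for unary operations, says precisely that $\alpha(u)=\beta(v)$ forces $\alpha=\beta$ and $u=v$) yields $\alpha=\beta$ and $\varphi'(x)=\psi'(y)$. Since $\length{\varphi'}=\length{\varphi}-1\leq\length{\psi}-1=\length{\psi'}$, the inductive hypothesis gives that $\varphi'$ is a prefix of $\psi'$ and $x=\varphi'{\setminus}\psi'(y)$. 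Concatenating with $\alpha=\beta$ on the left shows that $\varphi=\alpha\varphi'$ is a prefix of $\beta\psi'=\psi$, and by the uniqueness built into the definition of the ``$\setminus$'' notation, $\varphi{\setminus}\psi=\varphi'{\setminus}\psi'$, so $x=\varphi{\setminus}\psi(y)$.

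There is no real obstacle here; the only thing to be careful about is that the statement of the semi-Peano property in Definition~\ref{Peano}(i) is invoked with possibly \emph{different} operation symbols $\alpha$ and $\beta$, not assumed equal a priori. That clause (which is exactly what distinguishes semi-Peano algebras from merely injective-operation algebras) is what forces $\alpha=\beta$ and makes the induction go through. Injectivity of operations alone would not suffice.
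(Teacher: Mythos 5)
Your induction is exactly the argument the paper intends: its entire proof is the one-line instruction ``Apply Definition~\ref{Peano}(i) $n$ times, where $n=\length{\varphi}$,'' and your write-up is simply that iteration made explicit, with the base case and the bookkeeping for the $\varphi{\setminus}\psi$ notation filled in correctly. No gaps; same approach.
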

\begin{proof} Apply Definition~\ref{Peano}(i) $n$ times, where $n=\length{\varphi}$.
\end{proof}

\begin{prop} \label{Proposition1} Let $\mathcal{U}=(U;\{f_i\}_{i\in I})$ be a unary semi-Peano algebra with a minimal generating set $A$. Then:
\begin{align*}
\mathcal{U}=\coprod_{a \in A} \mathcal{U}_a,
\end{align*}
where $\mathcal{U}_a$ is the cyclic subalgebra $\mathcal{U}_a=(\langle a \rangle; \{f_{a,i}\}_{i \in I})$ and $f_{a,i}$ is the operation $f_i$ restricted to $\langle a \rangle$.
\end{prop}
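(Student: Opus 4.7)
The plan is to establish two things: the cyclic subalgebras $\mathcal{U}_a = \langle a \rangle$ for $a \in A$ cover $U$, and they are pairwise disjoint. Once both are in hand, the free-product decomposition is essentially automatic, because the operations are unary and each $\langle a \rangle$ is closed under the operations; so $\mathcal{U}$ is literally the disjoint union of these subalgebras equipped with the restricted operations, which is exactly how $\coprod$ was defined for unary algebras in Section~\ref{PreliminaryFacts}.

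The covering is immediate: since $A$ generates $\mathcal{U}$ and every operation is unary, every element $x \in U$ has the form $x = \varphi(a)$ for some word $\varphi \in F^*$ and some $a \in A$, so $x \in \langle a \rangle$.

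The main content is the pairwise disjointness. I would argue by contradiction: suppose that $x \in \langle a \rangle \cap \langle b \rangle$ for distinct $a, b \in A$, and write $x = \varphi(a) = \psi(b)$, with $\length{\varphi} \leq \length{\psi}$ after possibly swapping. Applying Lemma~\ref{LemaPrevi} to the semi-Peano algebra $\mathcal{U}$ yields that $\varphi$ is a prefix of $\psi$ and $a = (\varphi {\setminus} \psi)(b)$. Writing $\eta = \varphi {\setminus} \psi$, there are two cases. If $\eta$ is the empty word then $a = b$, contradicting $a \neq b$. If $\eta$ is non-empty, then $a = \eta(b) \in \langle b \rangle \subseteq \langle A \setminus \{a\} \rangle$, so $\langle A \setminus \{a\} \rangle = \langle A \rangle = U$, contradicting the minimality of $A$. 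Either case yields the desired contradiction.

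I do not anticipate any serious obstacle: Lemma~\ref{LemaPrevi} is doing all the heavy lifting by reducing the equation $\varphi(a) = \psi(b)$ to $a = \eta(b)$, and then the minimal generating set hypothesis rules out both remaining possibilities. The only subtle point worth flagging is that the argument does \emph{not} require elements of $A$ to be indecomposable; an element $a \in A$ may satisfy $a = \omega(a)$ for some non-empty $\omega$ (as in the mono-unary algebra $\mathbb{Z}/n\mathbb{Z}$ with successor), and this is consistent with the proof because only the relation $a = \eta(b)$ with $b \neq a$ is exploited.
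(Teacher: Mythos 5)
Your proof is correct and follows essentially the same route as the paper's: unarity gives the covering $U=\bigcup_{a\in A}\langle a\rangle$, Lemma~\ref{LemaPrevi} reduces an element of $\langle a\rangle\cap\langle b\rangle$ to a relation $a=\eta(b)$, and minimality of $A$ (together with $a\neq b$) gives the contradiction. Your explicit case split on whether $\eta$ is empty is a slightly more careful rendering of the paper's one-line appeal to minimality, but the argument is the same.
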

\begin{proof} First we see that $\langle A \rangle =\bigcup_{a \in A}  \langle a \rangle$. The direction $\supseteq$ is trivial. For the direction $\subseteq$, we notice that any combination of the operations can only be applied over only one element, since they are unary. Thus, for every $x\in \langle A \rangle$ there is a word $\varphi$ and an element $a \in A$, such that $\varphi(a)=x$. 

Next we check that the unions are disjoint. Let $a, b \in A$ be two different generators and supose that $\langle a \rangle \cap \langle b \rangle \not=\emptyset$. This means that there are two words, $\varphi$ and $\psi$, such that $\varphi(a)=\psi(b)$. Without lost of generality, we assume that $\length{\varphi} \leq \length{\psi}$. By Lemma~\ref{LemaPrevi}, $a=\varphi {\setminus} \psi (b)$, which is a contradiction with the minimality of the generating set $A$, because the generator $a$ can be removed. It is routine to prove that  each operation $f_i$ of $\mathcal{U}$ can be written as $f_i=\bigsqcup_{a \in A} f_{a, i}$. 
\end{proof}

By Proposition~\ref{Proposition1}, a unary semi-Peano algebra is $\amalg$-irreducible if and only if it is cyclic. In addition, all the minimal generating sets are equinumerous, whereby it makes sense to define the \emph{rank} of a  unary semi-Peano algebra as the cardinal of a minimal generating set. 

We will abbreviate $\Peano/\omega=\Free (a_0)/\boxminus( a_0, \omega(a_0))$, where $\omega \in F^*$ and $F$ is the set of operations of $\Free(a_0)$. We call the \emph{canonical generator} of $\Peano/\omega$ the equivalence class containing $a_0$.

\begin{lem}\label{Lemma2} Any cyclic and finitely presented unary semi-Peano algebra is of the form $\Peano/\omega$ for some word $\omega$.
\end{lem}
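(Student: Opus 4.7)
The approach is to start from any finite presentation $\mathcal{U} \cong \Free(a_0)/\boxminus(R)$ (which exists because $\mathcal{U}$ is cyclic and finitely presented) and progressively simplify $R$ until it consists of a single relation of the prescribed form $(a_0, \omega(a_0))$. The two key ingredients are Lemma~\ref{LemaPrevi}, which lets us cancel common prefixes inside a semi-Peano algebra, and a length induction modelled on Euclid's algorithm.

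First, I would normalize each relation in $R$. Since $\Free(a_0)$ is cyclic, every element has the shape $\varphi(a_0)$ for some word $\varphi \in F^*$, so every relation in $R$ is a pair $(\varphi(a_0), \psi(a_0))$; relabelling if necessary, assume $\length{\varphi} \leq \length{\psi}$. Because this equality holds in the semi-Peano algebra $\mathcal{U}$, Lemma~\ref{LemaPrevi} produces a word $\omega$ with $\psi = \varphi \omega$ and $a_0 = \omega(a_0)$ in $\mathcal{U}$. Conversely, $(a_0,\omega(a_0))$ implies $(\varphi(a_0),\psi(a_0))$ because congruences are compatible with the unary operations composing $\varphi$. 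Hence, after replacing each relation in this fashion and dropping any trivial $\omega = 1$, one obtains
\begin{align*}
\boxminus(R) = \boxminus\bigl(\{(a_0, \omega_i(a_0)) : i=1,\ldots,k\}\bigr)
\end{align*}
for certain nonempty words $\omega_1, \ldots, \omega_k$.

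Next I would study the set of ``loops at $a_0$'',
\begin{align*}
W = \{\omega \in F^* : a_0 \equiv \omega(a_0) \pmod{\boxminus(R)}\},
\end{align*}
which contains $1$ and each $\omega_i$. A single application of Lemma~\ref{LemaPrevi} to $\omega(a_0) = a_0 = \omega'(a_0)$ shows that any two elements of $W$ are prefix-comparable and that $\omega \setminus \omega' \in W$ whenever $\omega$ is a prefix of $\omega'$. If $W = \{1\}$ then every $\omega_i$ is trivial, $\boxminus(R)$ is the diagonal, and $\mathcal{U} = \Free(a_0) = \Peano/1$. Otherwise let $\omega$ be a shortest nonempty element of $W$; a Euclidean induction on $\length{\omega'}$ (at each step, subtract $\omega$ as a prefix from $\omega'$ and appeal to the minimality of $\length{\omega}$) shows that every $\omega' \in W$ is a power $\omega^n$. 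In particular each $\omega_i = \omega^{n_i}$, so $(a_0, \omega_i(a_0)) \in \boxminus(a_0, \omega(a_0))$ by iterating the defining relation, while $(a_0, \omega(a_0)) \in \boxminus(R)$ is automatic from $\omega \in W$. These two containments give $\boxminus(R) = \boxminus(a_0,\omega(a_0))$, whence $\mathcal{U} \cong \Peano/\omega$.

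The main obstacle I expect is to make the prefix-based Euclidean step watertight: since $F^*$ is not commutative, the cancellation of a common prefix has to be performed letter by letter, which is precisely what Lemma~\ref{LemaPrevi} provides. Once that is in place, the structural description $W = \{\omega^n : n \geq 0\}$ and the resulting isomorphism $\mathcal{U} \cong \Peano/\omega$ follow immediately from the two congruence inclusions noted above.
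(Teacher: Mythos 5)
Your proof is correct and follows essentially the same route as the paper: both normalize every relation to the form $(a_0,\omega_i(a_0))$ via Lemma~\ref{LemaPrevi} and then run a Euclidean length-reduction to collapse the relation set to a single word. The only cosmetic difference is that you organize the reduction around the loop set $W$ and its shortest nonempty element, whereas the paper uses a decreasing weight function on the finite relation set; the underlying prefix-cancellation step is identical.
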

\begin{proof} A cyclic and finitely presented unary semi-Peano algebra $\mathcal{U}$  is of the form:
\begin{align*}
\mathcal{U}\cong \faktor{\Free(c) }{\boxminus(\{(a_i,b_i) \mid i \in I\})},
\end{align*}
for some finite set $I$. Let $a \approx b$ denote two elements $a,b \in \Free(c)$ such that evaluates the same in $\mathcal{U}$. Consider the congruence generated by a single relation $\boxminus(a,b)$. Since $\mathcal{U}$ is cyclic, there are words $\varphi, \psi$ such that $\varphi(c)\approx a$ and $\psi(c)\approx b$. By Lemma~\ref{LemaPrevi}, $c \approx \varphi {\setminus} \psi (c)$ or $c \approx \psi {\setminus} \varphi (c)$. That is, $c \approx \phi(c)$ for some word $\phi$. Thus:
\begin{align*}
\boxminus(a,b)=\boxminus(\varphi(c), \psi(c))=\boxminus(c,\phi(c)).
\end{align*}
This means that $\mathcal{U}$ we can be rewritten as:
\begin{align*}
\mathcal{U}\cong\faktor{\Free(c) }{  \boxminus( \{(c,\phi_i(c)) \mid i \in J\} )},
\end{align*}
where $J=\{ i \in I \mid a_i \not \approx b_i\}$. In the case that $J=\emptyset$, $\mathcal{U}=\Free(c)=\Peano/1$ and we are done. 
So, we assume that $\length{J}>1$. Given a set of relations $R \subseteq \{ (c, \varphi(c)) \mid \varphi\not=1\}$, we introduce the \emph{weight} of $R$ as:
\begin{align*}
W (R)=\sum_{(c, \varphi(c)) \in R} \length{\varphi}. 
\end{align*}
Suppose that there are two relations $(c,\varphi(c)), (c,\psi(c)) \in R$ such that $\varphi$ and $\psi$ have different length, say, $\length{\varphi}<\length{\psi}$. Then we define:
\begin{align*}
R'=R \setminus \{(c, \psi(c))\} \cup \{(c, \varphi{\setminus}\psi (c))\}.
\end{align*}
Notice that $\boxminus(R)=\boxminus(R')$, but 
\begin{align*}
W(R')=W(R)- \length{\psi} + (\length{\psi}-\length{\varphi})=W(R)-\length{\varphi}<W(R),
\end{align*}
because $\varphi\not=1$. We denote by a superscript $k)$ the successive application of that transformation: $R, R^{1)}, R^{2)}, R^{3)}, \ldots$. Since $R=\{(c, \phi_i(c)) \mid i \in J\}$ is finite, $W(R)$ is finite, which means that we can only apply the transformation finite many times, say $k\geq0$. In that point, $\boxminus(R^{k)})=\boxminus(R)$, and for each $(c,\varphi(c)), (c, \psi(c)) \in R^{k)}$, $\length{\varphi}=\length{\psi}$. But then, by Lemma~\ref{LemaPrevi}, $\varphi(c) \approx \psi(c)$ implies that $\varphi$ is a prefix of $\psi$ and vice versa. Therefore, $\varphi=\psi$, and $R^{k)}$ only contains one relation, say $(c, \omega(c))$, and:
\begin{align*}
\mathcal{U}\cong\faktor{\Free(c) }{\boxminus(c,\omega(c))}=\faktor{\Peano}{\omega},
\end{align*}
for some word $\omega$. 
\end{proof}

\begin{lem} \label{LemaLlarg} Let $\Peano/\omega$ be a cyclic semi-Peano algebra, and $a$ be the canonical generator.
\begin{enumerate}
\item[{\rm (i)}] If $\varphi(a)=a$ for some word $\varphi$, then $\varphi=\omega^n$ for  some $n\geq 0$. 
\item[{\rm (ii)}]  \label{LemaLlarg1} If $\varphi(a)=\psi(a)$ for some words $\varphi, \psi$, with $\length{\varphi}\leq \length{\psi}$, then $\psi=\varphi \omega^n$ for  some $n\geq 0$. 
\item[{\rm (iii)}] \label{LemaLlarg2} If $\varphi(b)=b$ for some $b \in \Peano/\omega$ and some word $\varphi$, then $\varphi$ and $\omega^n$ are conjugated for some $n\geq 0$. 
\end{enumerate}
\end{lem}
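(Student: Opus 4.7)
I will prove part (i) first and deduce (ii) and (iii) quickly from it. For (ii), given $\varphi(a)=\psi(a)$ with $\length{\varphi}\leq\length{\psi}$, Lemma~\ref{LemaPrevi} supplies a word $\tau$ with $\psi=\varphi\tau$ and $\tau(a)=a$; part (i) then gives $\tau=\omega^{n}$, so $\psi=\varphi\omega^{n}$. For (iii), since $\Peano/\omega$ is generated by $a$, write $b=\chi(a)$ for some word $\chi$; the hypothesis $\varphi(b)=b$ becomes $\varphi\chi(a)=\chi(a)$, and since $\length{\chi}\leq\length{\varphi\chi}$, part (ii) yields $\varphi\chi=\chi\omega^{n}$, which is exactly the condition that $\varphi$ and $\omega^{n}$ be conjugated, with $\chi$ as the conjugating word.

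For (i) my plan is to describe the congruence $\boxminus(a_0,\omega(a_0))$ explicitly. Define a binary relation $\theta$ on $\Free(a_0)$ by declaring $(\varphi(a_0),\psi(a_0))\in\theta$ if and only if there exists $k\geq 0$ such that $\psi=\varphi\omega^{k}$ or $\varphi=\psi\omega^{k}$. I would check routinely that $\theta$ is a congruence: reflexivity takes $k=0$; symmetry is built into the disjunction; transitivity is a short case split on which clauses the two input pairs use; and compatibility holds because prepending a letter $f\in F$ to both sides preserves each clause. I would then verify that $\theta$ is closed and balanced: closedness because cancelling a common first letter in the free monoid $F^{*}$ preserves the defining relation, and balancedness because if $f\varphi$ and $g\psi\omega^{k}$ (or vice versa) are equal, their first letters agree, so $f=g$.

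Since $(a_0,\omega(a_0))\in\theta$ (take $k=1$) and, conversely, any closed and balanced congruence containing $(a_0,\omega(a_0))$ must by compatibility and transitivity contain every pair $(\varphi(a_0),\varphi\omega^{k}(a_0))$ and hence, by symmetry, all of $\theta$, we conclude $\theta=\boxminus(a_0,\omega(a_0))$. Statement (i) is then immediate: $\varphi(a)=a$ in $\Peano/\omega$ means $(\varphi(a_0),a_0)\in\theta$, i.e., either $1=\varphi\omega^{k}$ (which in the free monoid forces $\varphi=1=\omega^{0}$) or $\varphi=\omega^{k}$; in either case $\varphi=\omega^{n}$ for some $n\geq 0$.

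The main obstacle I anticipate is the careful case-analysis needed to verify that $\theta$ is genuinely closed and balanced and not merely a congruence. A tempting shortcut---proving (i) directly by induction on $\length{\varphi}$ from Lemma~\ref{LemaPrevi}---handles $\length{\varphi}\geq\length{\omega}$ cleanly, but in the case $0<\length{\varphi}<\length{\omega}$ iterating the lemma only shows that $\varphi$ commutes with a proper suffix of $\omega$, forcing $\omega=\tau^{m}$ and $\varphi=\tau^{k}$ with $0<k<m$; ruling out this last scenario appears to require precisely the kind of explicit model that $\theta$ furnishes, so the description of $\theta$ seems unavoidable.
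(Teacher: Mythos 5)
Your proof is correct and follows essentially the same route as the paper: parts (ii) and (iii) are deduced from (i) exactly as in the published argument, and (i) rests on identifying the congruence class of $a_0$ with $\{\omega^n(a_0) \mid n\geq 0\}$. The only difference is that the paper simply asserts this identification, whereas you justify it by exhibiting the relation $\theta$ explicitly and checking that it is a closed and balanced congruence and is minimal over $(a_0,\omega(a_0))$ --- a worthwhile addition, since that is precisely the step the paper's own proof leaves unargued.
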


\begin{proof} (i) Let $\Peano/\omega= \Free(a_0)/\boxminus( a_0, \omega(a_0))$. The equivalence class $a=\{a_0, \omega(a_0), \omega^2(a_0), $ $\omega^3(a_0), \ldots \}$ is the canonical generator of $\Peano/\omega$. Suppose that $\varphi(a)=a$, for some word $\varphi$. Then, $\varphi(a_0) \in a$, which means that $\varphi(a_0)=\omega^n(a_0)$. Since  $\Free(a_0)$ is absolutely free, $\varphi=\omega^n$, for some $n\geq0$. 

(ii) If $\varphi(a)=\psi(a)$, then by Lemma~\ref{LemaPrevi}, $\varphi{\setminus} \psi(a)=a$. By (i), $\varphi{\setminus} \psi=\omega^n$, that is, $\psi=\varphi \omega^n$. 

(iii) Since $a$ is a generator, $b=\alpha(a)$, for some word $\alpha$. Then, $\alpha(a)=\varphi(\alpha(a))$, which means that $a=\alpha{\setminus} \varphi \alpha(a)$. By (i), $\alpha {\setminus} \varphi \alpha=\omega^n$ for some $n\geq 0$, whereby $\varphi\alpha=\alpha\omega^n$. 
\end{proof}

\begin{prop} \label{Proposition2} $\Peano/\omega\cong \Peano/\omega'$ if and only if  $\omega$ and $\omega'$ are conjugated words.
\end{prop}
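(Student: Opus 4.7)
My plan is a two-directional argument, with the nontrivial content in the reverse implication.

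For the easy direction, write $\omega = \alpha\beta$ and $\omega' = \beta\alpha$, and let $a, a'$ denote the canonical generators of $\Peano/\omega$ and $\Peano/\omega'$, respectively. I will specify a homomorphism $\Phi : \Peano/\omega \to \Peano/\omega'$ by sending $a \mapsto \alpha(a')$. To justify that this extends to a well-defined map, I lift it to the free map $\Free(a_0) \to \Peano/\omega'$ with $a_0 \mapsto \alpha(a')$ and check that the defining relation $(a_0, \omega(a_0))$ falls in the kernel; this reduces to the identity $\omega(\alpha(a')) = \alpha\beta\alpha(a') = \alpha(\omega'(a')) = \alpha(a')$, the last step using $\omega'(a') = a'$. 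A symmetric definition of $\Psi : \Peano/\omega' \to \Peano/\omega$ via $a' \mapsto \beta(a)$ works by the analogous identity $\omega'(\beta(a)) = \beta\omega(a) = \beta(a)$. Composing on generators, $\Psi\Phi(a) = \alpha\beta(a) = \omega(a) = a$ and $\Phi\Psi(a') = \beta\alpha(a') = \omega'(a') = a'$, so $\Phi$ is an isomorphism.

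For the converse, let $\Phi : \Peano/\omega \to \Peano/\omega'$ be an isomorphism, set $b = \Phi(a)$, and note that $\omega(b) = \Phi(\omega(a)) = \Phi(a) = b$. Lemma~\ref{LemaLlarg}(iii) applied inside $\Peano/\omega'$ then gives $n \geq 0$ such that $\omega$ is conjugated to $(\omega')^n$. The same argument applied to $\Phi^{-1}$ furnishes $m \geq 0$ with $\omega'$ conjugated to $\omega^m$. Since conjugated words are equal in length, one has $|\omega| = n|\omega'|$ and $|\omega'| = m|\omega|$, hence $|\omega| = nm|\omega|$. If $\omega \neq 1$, this forces $n = m = 1$, so $\omega$ is directly conjugated to $\omega'$. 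If $\omega = 1$, then also $\omega' = 1$, and the two are trivially conjugated.

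The main obstacle I foresee is not in constructing $\Phi$ and $\Psi$, which are essentially forced by the conjugacy decomposition, but rather in sharpening the output of Lemma~\ref{LemaLlarg}(iii) in the converse: on its own it only yields conjugacy with some power $(\omega')^n$, and only the symmetric length comparison narrows this to $n = 1$. The degenerate case $\omega = 1$ warrants separate treatment so as not to appeal to division in the length identity.
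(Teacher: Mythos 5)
Your proof is correct. The forward implication is essentially the paper's argument: apply Lemma~\ref{LemaLlarg}(iii) to $\Phi$ and to $\Phi^{-1}$ to get $\omega$ conjugated to $\omega'^{\,n}$ and $\omega'$ conjugated to $\omega^m$, then force $mn=1$ by comparing lengths (the paper reaches $\length{\omega'}=mn\length{\omega'}$ by substituting one conjugation into the other; your shortcut via ``conjugated words have equal length'' is the same computation, and your separate treatment of $\omega=1$ matches the paper's). The reverse implication is where you genuinely diverge: the paper defines $\Phi(x)=\varphi\beta(b)$ for $x=\varphi(a)$ and then verifies well-definedness, surjectivity and injectivity by hand, each step invoking Lemma~\ref{LemaLlarg}(ii); you instead build $\Phi$ and a candidate inverse $\Psi$ by the universal property of the presentation and check $\Psi\Phi$ and $\Phi\Psi$ fix the generators, which suffices since homomorphisms of cyclic unary algebras are determined by their value on a generator. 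Your route is shorter and avoids Lemma~\ref{LemaLlarg}(ii) entirely. One point you should make explicit: $\Peano/\omega$ is the quotient by $\boxminus(a_0,\omega(a_0))$, the least \emph{closed and balanced} congruence containing the pair, not the ordinary congruence it generates; so checking that $(a_0,\omega(a_0))$ lies in the kernel of the lift only yields the factorization once you observe that this kernel is itself closed and balanced, which holds by Lemma~\ref{LemaEquiClosed} because the image lands in the semi-Peano algebra $\Peano/\omega'$ and $\mathbf{SP}$ is closed under subalgebras. With that remark added, the argument is complete.
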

\begin{proof} $(\Rightarrow)$ Let $\Phi: \Peano/\omega \longrightarrow \Peano/\omega'$ be an isomorphism, $a$, the generator of $\Peano/\omega$, and $b=\Phi(a)$. Since $\omega(a)=a$, applying the homomorphism we get $\Phi(\omega(a))=\omega(\Phi(a))=\Phi(a)$, that is, we have the identity $\omega(b)=b$ in  $\Peano/\omega'$. By Lemma~\ref{LemaLlarg}(iii), $\omega$ and $\omega'^n$ are conjugated for some $n\geq 0$. Now, by considering the inverse isomorphism $\Phi^{-1}: \Peano/\omega' \longrightarrow \Peano/\omega$, and applying the same reasoning, $\omega'$ and $\omega^m$ are conjugated for some $m\geq 0$. 

If $\omega=\omega'=1$, we are done. So, let us suppose that at least one of the words, say $\omega'$, is different to $1$. 
Join both conjugations: $\omega \alpha= \alpha \omega'^n$ and $\omega' \beta= \beta \omega^m$. 
From the first equality, $\omega= \alpha\omega'^n /\alpha$. Substituying in the second equality and taking lengths: 
\begin{align*}
& \omega' \beta=\beta (\alpha\omega'^n /\alpha)^m \\
\implies &\length{\omega'}+\length{\beta}=\length{\beta}+m(\length{\alpha} + n\length{\omega'}-\length{\alpha})\\
 \implies & \length{\omega'}=mn\length{\omega'} \implies mn=1 \implies m=n=1,
\end{align*}
where we have used that $\omega'\not=1$, that is, $\length{\omega'}\not=0$. Therefore, $\omega$ and $\omega'$ are conjugated.  

$(\Leftarrow)$ Suppose that $\omega=\beta\alpha$ and $\omega'=\alpha\beta$.  Consider the mapping $\Phi: \Peano/\omega \longrightarrow \Peano/\omega'$ defined by $\Phi(x)=\varphi \beta (b)$, where $\varphi(a)=x$, and where $a$ and $b$ are the canonical generators of $  \Peano/\omega$ and $\Peano/\omega'$, respectively. First we check that the mapping is well-defined. Suppose that $x=\varphi(a)=\psi(a)$, with $\length{\varphi}\leq \length{\psi}$. By Lemma~\ref{LemaLlarg}~(ii), $\psi=\varphi \omega^n$, for some $n\geq0$. Therefore, 
\begin{align*}
\Phi(\psi(a))&=\Phi(\varphi\omega^n(a))=\varphi\omega^n \beta(b)=\varphi(\beta\alpha)^n \beta(b)=\varphi \beta (\alpha\beta)^n (b)=\varphi\beta(b) \\
&=\Phi(\varphi(a)).
\end{align*}
Clearly $\Phi$ is an homomorphism. If $x=\varphi(a)$, then  for each $f\in F$,  $\Phi(f(x))=\Phi(f(\varphi(a)))=\Phi(f\varphi(a))=f\varphi\beta(b)=f (\varphi\beta(b))= f(\Phi(a))=f(\Phi(x))$.
Next, we see that $\Phi$ is an epimorphism. Since $\alpha (\beta(b)) \in \langle \beta(b)\rangle$, we have:
\begin{align*}
\langle \alpha(\beta(b))\rangle \subseteq \langle \beta(b) \rangle \subseteq \langle b \rangle
& \implies \langle \alpha\beta(b)\rangle =\langle b \rangle \subseteq \langle \beta(b) \rangle \subseteq \langle b \rangle \\
& \implies  \langle b \rangle= \langle \beta(b) \rangle = \langle \Phi(a) \rangle. \end{align*}
Finally we check that $\Phi$ is a monomorphism. Suppose $x=\varphi(a)$ and $y=\psi(a)$ with $\length{\varphi} \leq \length{\psi}$. Then:
\begin{align*}
 \Phi(x)=\Phi(y) &\implies \Phi(\varphi(a))=\Phi(\psi(a)) \implies \varphi\beta (a)=\psi \beta (a) \\
&\implies  (\varphi \beta){\setminus} (\psi \beta)=\omega'^n \\
&\implies \psi \beta=\varphi \beta \omega'^n =\varphi\beta(\alpha\beta)^n=\varphi (\beta\alpha)^n \beta,
\end{align*}
for some $n\geq 0$. Since we are in the free monoid $F^*$, we can cancel $\beta$'s:
\begin{align*}
\psi\beta=\varphi (\beta\alpha)^n \beta \implies \psi=\varphi(\beta\alpha)^n,
\end{align*}
and then $\psi(a)=\varphi(\beta\alpha)^n(a)=\varphi( \omega^n(a))=\varphi(a)$. That is, $y=x$. 
\end{proof}

Now we can classify all the finitely presented unary semi-Peano algebras. 

\begin{thm} \label{FirstDecomp} For any finitely presented unary semi-Peano algebra $\mathcal{U}$ with rank $n$ there are $n$ words $\omega_1, \ldots, \omega_n$ such that:
\begin{align*}
\mathcal{U} \cong \faktor{\Peano}{\omega_1} \amalg  \cdots \amalg  \faktor{\Peano}{\omega_n},
\end{align*}
and the decomposition is unique, up to the order of the factors and conjugation of the words.  
\end{thm}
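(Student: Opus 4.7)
The plan is to assemble the theorem from the three preceding results: Proposition~\ref{Proposition1} gives the decomposition into cyclic pieces, Lemma~\ref{Lemma2} classifies each piece as $\Peano/\omega$, and Proposition~\ref{Proposition2} extracts the uniqueness up to conjugation. The main work lies in arranging the hypotheses so that each of these lemmas can actually be invoked.

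For \emph{existence}, I would begin by fixing a minimal generating set $A=\{a_1,\ldots,a_n\}$ of $\mathcal{U}$, so $n$ is the rank. Proposition~\ref{Proposition1} immediately yields $\mathcal{U}=\coprod_{i=1}^{n}\mathcal{U}_{a_i}$ with each $\mathcal{U}_{a_i}$ cyclic. To apply Lemma~\ref{Lemma2}, I still need each $\mathcal{U}_{a_i}$ to be finitely presented. This I would deduce as follows: start from any finite presentation of $\mathcal{U}$ and, after rewriting each original generator as a word in $A$ (possible because $A$ generates $\mathcal{U}$), obtain a finite presentation $\mathcal{U}\cong\Free(A)/\boxminus(R)$ with $R$ finite. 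Because the algebra is unary, every relation in $R$ has the shape $(\varphi(a),\psi(b))$ for some $a,b\in A$ and words $\varphi,\psi$; but its image in $\mathcal{U}$ is an equality $\varphi(a)=\psi(b)$, and by the disjointness of the cyclic pieces established in Proposition~\ref{Proposition1} this forces $a=b$. Consequently $R$ partitions as $R_1\sqcup\cdots\sqcup R_n$, each $R_i$ involving only $a_i$, and $\mathcal{U}_{a_i}\cong\Free(a_i)/\boxminus(R_i)$ is finitely presented. Lemma~\ref{Lemma2} then produces words $\omega_i$ with $\mathcal{U}_{a_i}\cong\Peano/\omega_i$.

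For \emph{uniqueness}, suppose two such decompositions are given, $\mathcal{U}\cong\coprod_{i=1}^{n}\Peano/\omega_i\cong\coprod_{j=1}^{m}\Peano/\omega'_j$. The canonical generators of the factors assemble into minimal generating sets on either side, so $n=m$ equals the rank. Next I would invoke the equivalence relation $\sim$ from Section~\ref{PreliminaryFacts}: it is defined purely in terms of the unary operations, hence preserved by any isomorphism, and its equivalence classes are precisely the $\amalg$-irreducible summands, which here are the cyclic factors. An isomorphism $\mathcal{U}\to\mathcal{U}$ therefore induces a permutation $\sigma$ of $\{1,\ldots,n\}$ together with isomorphisms $\Peano/\omega_i\cong\Peano/\omega'_{\sigma(i)}$, and Proposition~\ref{Proposition2} converts each such isomorphism into the conjugacy of $\omega_i$ and $\omega'_{\sigma(i)}$.

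The step I expect to be the main obstacle is the finite presentability of the cyclic summands: although intuitively obvious, it relies on carefully re-presenting $\mathcal{U}$ over a minimal generating set and then using the semi-Peano structure, via Proposition~\ref{Proposition1}, to guarantee that no relation can bridge two different cyclic components. Once this bookkeeping is in place, the rest is a direct orchestration of the previous results.
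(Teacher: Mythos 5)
Your proposal is correct and follows essentially the same route as the paper's own proof: Proposition~\ref{Proposition1} for the decomposition into cyclic summands, Lemma~\ref{Lemma2} to identify each summand as some $\Peano/\omega_i$, and Propositions~\ref{Proposition1} and~\ref{Proposition2} for uniqueness up to permutation and conjugation. The only difference is that you explicitly justify the finite presentability of each cyclic summand before invoking Lemma~\ref{Lemma2}, a hypothesis the paper applies without comment; your disjointness argument for why no relation can bridge two components is a reasonable way to fill that in.
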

\begin{proof} Let $A=\{a_1, \ldots, a_n\}$ be a minimal generating set of $\mathcal{U}$. By Proposition~\ref{Proposition1}, $\mathcal{U} = \mathcal{U}_{a_1} \amalg  \cdots \amalg  \mathcal{U}_{a_n}$, 
where $\mathcal{U}_{a_1}, \ldots, \mathcal{U}_{a_n}$ are cyclic. By Lemma~\ref{Lemma2}, each $\mathcal{U}_{a_i}$ is isomorphic to $\Peano/\omega_i$ for some word $\omega_i$. The uniqueness of the decomposition follows from Propositions~\ref{Proposition1} and~\ref{Proposition2}.
\end{proof}

\begin{exa} Consider the bijection $\diamond: \Nat \times \Nat \longrightarrow \Nat$:
\begin{align*}
x \diamond y= \frac{1}{2} ( x^2+y^2+2xy -x-3y+2 ).
\end{align*}
The groupoid $\mathcal{G}=(\Nat; \diamond)$ is the reduct of a Jónsson-Tarski algebra, and it has only two non-trivial relations:
\begin{align*}
1\diamond 1=1 \quad \mbox{ and } \quad 1 \diamond 2=2,
\end{align*}
The unarisation on the second component $\mathfrak{U}_2(\mathcal{G})$ gives a unary semi-Peano algebra, recall Example~\ref{Unarisation}, with infinite countable unary operations $\diamond_a(x)=a\diamond x$, for each $a \in \Nat$.
The above relations in $\mathcal{G}$ become the unary relations in $\mathfrak{U}_2(\mathcal{G})$:
\begin{align*}
\diamond_1(1)=1 \quad \mbox{ and } \quad \diamond_1(2)=2.
\end{align*}
It can be proved that $1$ and $2$ are the generators of $\mathfrak{U}_2(\mathcal{G})$:
\begin{align*}
\Nat = \langle 1 \rangle \sqcup \langle 2 \rangle =\{1,3,4,6,8,10,13,16, \ldots\} \sqcup \{2,5,9,11,14,17,\ldots\},
\end{align*}
and therefore:
\begin{align*}
\mathfrak{U}_2(\mathcal{G}) \cong \faktor{\Peano}{\diamond_1} \amalg \faktor{\Peano}{\diamond_1}.
\end{align*}
If we take the unarisation on the first component, the operations are of the form $\widetilde{\diamond}_a=x \diamond a$. Only one generator is needed, $1$, and then:
\begin{align*}
\mathfrak{U}_1(\mathcal{G}) \cong \faktor{\Peano}{\widetilde{\diamond}_1}. 
\end{align*}
\end{exa}

\begin{figure}[tb] 
\centering
\begin{overpic}[scale=0.15]{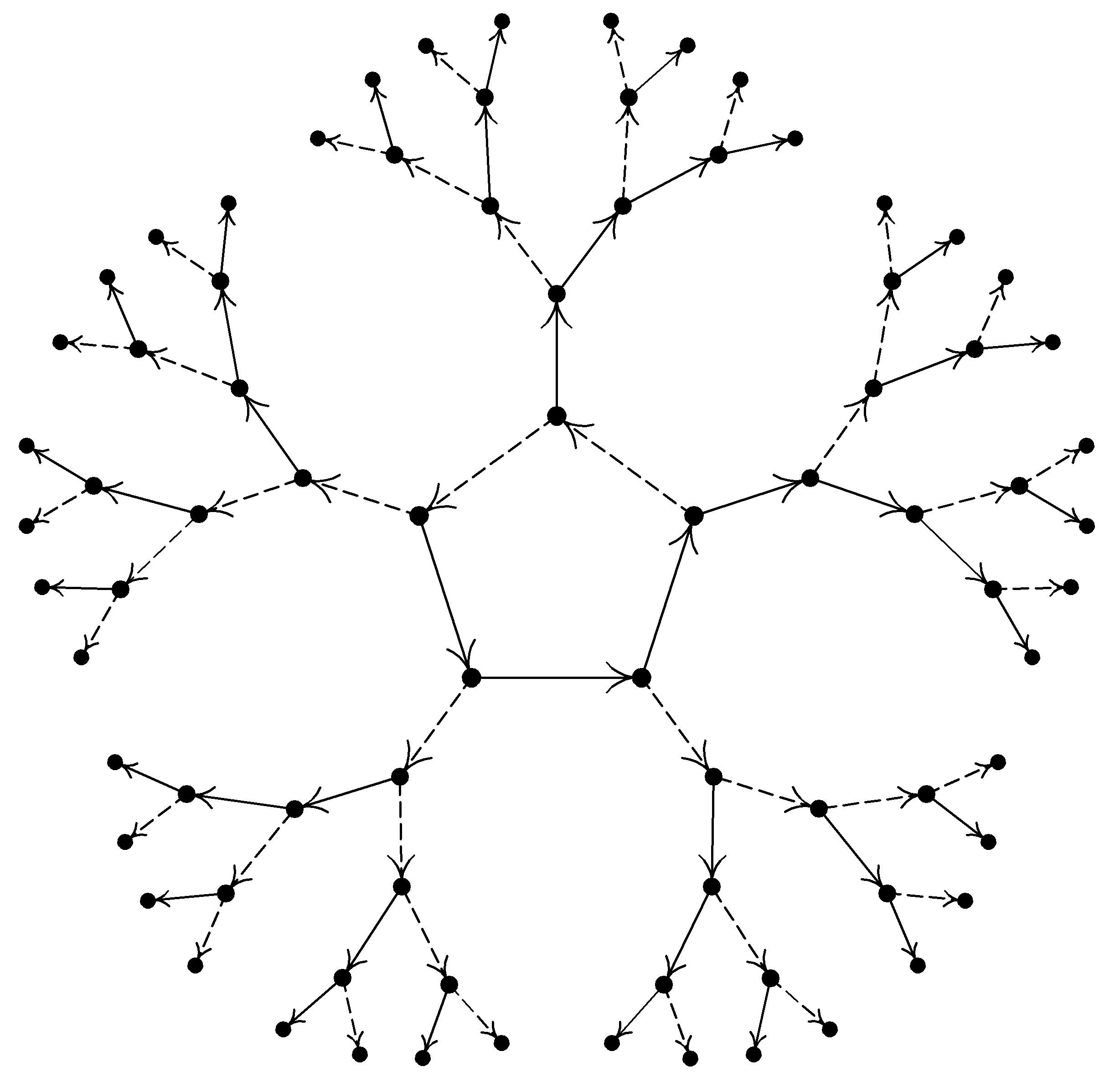}
 \put (35,43) {$f$} 
 \put (61,42) {$f$}
 \put (56,57) {$g$}
 \put (42,57) {$g$}
 \put (48,31) {$f$}
\end{overpic}
\caption{A portion of the graph of $\Peano/fg^2f^2$. }\label{Pentagon}
\end{figure}

\begin{figure}[tb] 
\centering
\begin{overpic}[scale=0.15]{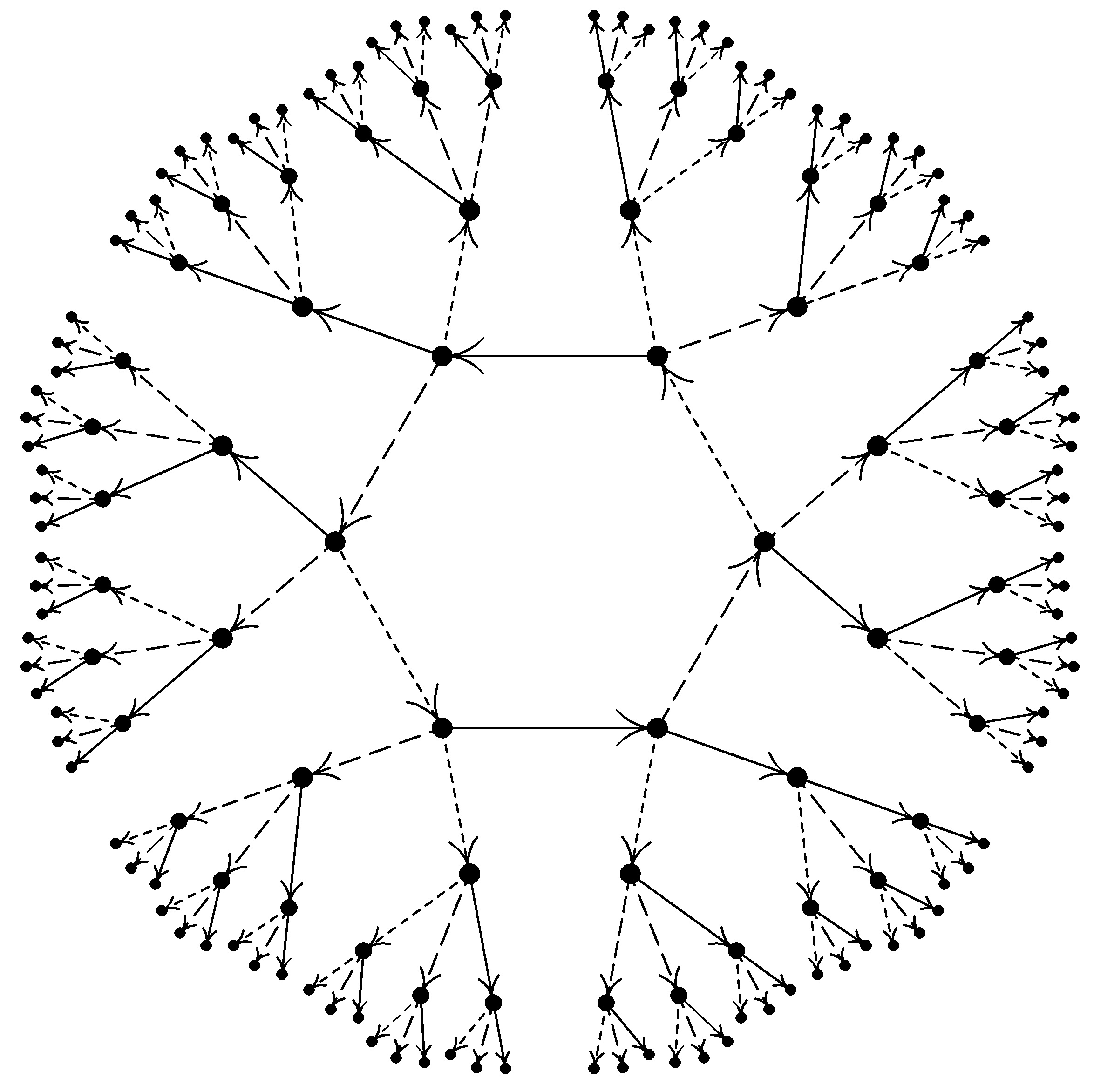}
  \put (50,35) {$f$} 
  \put (63,43) {$g$} 
  \put (62,55) {$h$} 
  \put (50,63) {$f$} 
  \put (36,55) {$g$} 
  \put (36,43) {$h$} 
\end{overpic}
\caption{A portion of the graph of $\Peano/(fgh)^2$.}\label{Hexagon}
\end{figure}

\begin{figure}[tb] 
\centering
\begin{overpic}[scale=0.11]{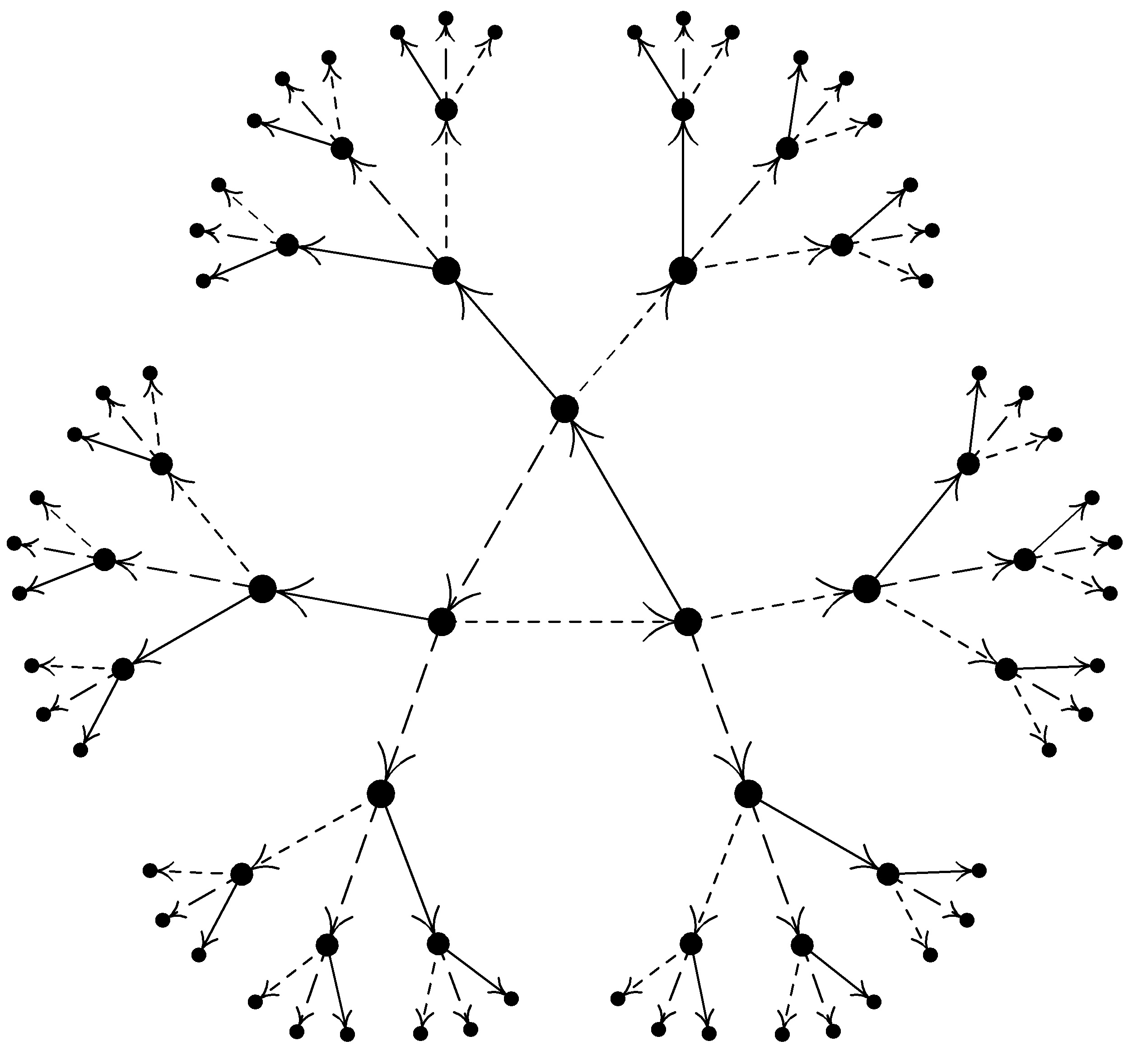}

  \put (59,47) {$f$} 
  \put (39,47) {$g$}
   \put (48,33) {$h$} 
\end{overpic}
\caption{A portion of the graph of $\Peano/fgh$.}\label{Triangle}
\end{figure}

The graph of a unary algebra gives us nice representations of semi-Peano algebras.
If $\omega\not=1$, the graph of $\Peano/\omega$ contains a unique cycle defined by the word $\omega$, see Figure~\ref{Pentagon}. Every vertex of that cycle is an alternative generator of $\Peano/\omega$. The cycle determines the rest of the graph, which can be completed by adding trees. We can check many properties visually. For instance, all the proper subalgebras of a cyclic unary semi-Peano algebra are absolutely free. Or, for instance, If $q$ divides $p$, there is an epimorphism $\Peano/\omega^p \longrightarrow \Peano/\omega^q$. Thus, $\Peano/fgh$ is a quotient of $\Peano/(fgh)^2$; see Figure~\ref{Hexagon} and \ref{Triangle}. The algebra $\Peano/fg^2f^2$ has no proper quotients, Figure~\ref{Pentagon}.

As we have seen, when we consider semi-Peano algebras with several operations (II), the presentation is no longer unique, as in the case of algebras with one operation (I). That suggests that the general case could be much more complex than a simple combination of the ideas of (I) and (II).




\bibliographystyle{amsplain}

\bigskip
\bigskip

{\footnotesize {\bf First Author}\; \\ {Departament de Ci\`encies Bàsiques}, {Universitat Internacional de Catalunya, c/ Josep Trueta s/n,} {Sant Cugat del Vallès,  08195, Spain.}\\
{\tt Email: ccardo@uic.es}\\

\end{document}